\title{Series and Product Relations Made from Primes}
\author{Ken Hicks
        \&
        Kevin Ward}
\newtheorem{theorem}{Theorem}
\newtheorem{lemma}{Lemma}
\newenvironment{proof}{{\sc Proof:}}{~\hfill QED}
\begin{document}
\newpage
\maketitle
\begin{abstract}
It is known that the sum of the reciprocal of integers, $\sum_n (1/n)$, 
and the sum of the reciprocal of primes, $\sum_n (1/p_n)$, both diverge. 
Here, we study a series made from primes that sums exactly to 1. 
We also show this sum is simply related to an infinite product over primes. 
We then generalize the form of the series (and its related product), 
extending this idea to other number theoretic sets such as twin primes.
Evaluating a product made from twin primes gives a 
new mathematical constant.
\end{abstract}

\section{Introduction}
Prime numbers hold a unique place in number theory.  These are the basic units of multiplication, yet there are many things we still don't know about prime numbers.  For example, it is fairly easy to show that there are an infinite number of primes, yet we still don't know if there are an infinite number of twin primes (i.e., prime pairs with a difference of 2).  Another interesting fact is that the sum over reciprocals of primes diverges, yet it has been proven that the sum over reciprocals of twin primes converges (to a mathematical constant called Brun's constant).  There are still many things to be learned about the distributions of prime numbers and of twin primes, and also new mathematical constants to be discovered.

While investigating the distribution of primes and twin primes, we stumbled upon a sum of rational fractions, made only from prime numbers, that converged exactly to unity. Surprisingly, this series did not show up in a literature search\footnote{We later found this series at the OEIS, see sequence A005867, but without any proof \cite{OEIS}.}.  There are numerous examples of integer series in standard textbooks of number theory, but few series that involve primes (or twin primes). We presented this series to a number of professional mathematicians, some with expertise in number theory, but none had seen it before.  Here, we present a general proof of the convergence of this series, and apply this theorem to several related series.  When applied to twin primes, a new mathematical constant emerges.

The motivation for this work comes from a study of the sieve of Erostosthenes.  The use of a sieve to enumerate prime numbers goes back to the time of Euclid, many thousands of years ago.  In its simplest form, the integers greater than 1 are written in a long line, then factors of two are removed (marking out every other number, leaving just the odd numbers), then factors of three are removed (marking out every third number starting with 3), and so on.  It is easy to see that the first step removes half of all integers, the second step removes on-third of those remaining, and so forth.  The series given here is based on counting the fraction of all numbers removed by this sieve process.  

In some sieve methods, it is natural to define a primorial number, often denoted as $p\#$, which is like a factorial but multiples only primes instead of integers.  For example, the primorial of the first three primes is $p_3\# = 2 \cdot 3 \cdot 5= 30$.  The set of all integers up to a given primorial $p_n \#$ is useful as a finite set, which is often easier to work with than for the infinite set of integers.  For example, when sieving the numbers up to $p_2 \#=6$, sieving by 2 removes the numbers 2, 4, and 6 (exactly half of the numbers) and sieving by 3 removes just 3 ($\frac{1}{6}$ of the set, since the number 6 was already sieved in the first step).  Because of the repeating pattern of Erostosthenes sieve, the same fractions are found for the next finite set of integers (from 7 to 12) and so forth to infinity.  It is a more complicated process to find the exact number of prime numbers below a given primorial number, but the point here is that some proofs in sieve theory can be more easily done within a finite set.  Our goal here is not to delve into sieve theory, but to look at a simple series that was motivated by sieve methods, where primorial numbers arise naturally.

With this introduction, and a bit of thought, it is perhaps not surprising that the series given below converges.  The prime numbers become more sparse for larger integers, and so the fraction removed by each step of the sieve becomes smaller. But why should the series given here converge exactly to unity instead of some other real number? But before going further, we challenge the reader to write a proof of this.  It's not easy!

\section{Series Based on Number Sieves}
\subsection{Prime Series}
Consider the series, made from primes:
\begin{equation}
\frac{1}{2}+\frac{1}{6}+\frac{2}{30}+\frac{8}{210}+\frac{48}{2310}+\cdots 
+ \frac{1 \cdot 2 \cdot 4 \cdots (p_{k-1}-1)}{2 \cdot 3 \cdot 5 \cdots p_k} 
\label{eq:sum1}
\end{equation}
which arose from our study of prime number sieves. 
We will analyze it rigorously next. 

As described in the introduction, we start with the integers greater than 1 and sieve out any multiple of 2, leaving the fraction $ \frac{1}{2}$. By sieving out multiples of 3 we eliminate $\frac{1}{3}$ of all odd numbers, or $\frac{1}{6}$ of all integers. Next, sieving out all multiples of 5 leaves $ \frac{1}{5} - \frac{1}{2*5} - \frac{1}{3*5} + \frac{1}{2*3*5} = \frac{2}{30}$, which accounts for any double-counting from previous sieving. This gives us the terms of our series. More rigorously, we can define each term in the series $T_n$ as
\begin{equation} T_n = \frac{\phi (p_{n-1}\#)}{p_n\#} 
= \frac{1}{2},\,\frac{1}{6},\,\frac{2}{30},\,\frac{8}{210},\, \cdots 
\quad \textrm{for} \quad n = 1,2,3,\dots
\label{eq:term1}
\end{equation}
where $\phi(n)$ is Euler's totient function and $p_n\#$ is the n$^{th}$ primordial, the product of all primes up to $p_n$. We obtain this by noticing that the numerator is the number of numbers coprime to the primordial one order behind the one in our denominator. This can also be written as 

$$\frac{1 \cdot 2 \cdot 4 \cdots (p_{n-1}-1)}{2 \cdot 3 \cdot 5 \cdots p_n}$$

\begin{theorem}
Let $S_n$ be defined as the partial sum such that
$$ S_n  = \sum^{n}_{i=1} T_i, $$
then,
$$\lim_{n\to\infty} S_n = 1$$.
\label{thm:one}
\end{theorem}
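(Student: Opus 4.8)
The plan is to recognize that, despite appearances, this series telescopes once each term is rewritten in terms of the totient ratio
$$Q_n := \frac{\phi(p_n\#)}{p_n\#} = \prod_{i=1}^{n}\left(1 - \frac{1}{p_i}\right),$$
with the convention $Q_0 = 1$ (empty product). Since a primorial is squarefree, $\phi(p_n\#) = \prod_{i=1}^{n}(p_i - 1)$, and therefore
$$T_n = \frac{\phi(p_{n-1}\#)}{p_n\#} = \frac{\phi(p_{n-1}\#)}{p_{n-1}\#}\cdot\frac{p_{n-1}\#}{p_n\#} = \frac{Q_{n-1}}{p_n}.$$
On the other hand, peeling off the last factor of $Q_n$ gives $Q_n = Q_{n-1}\left(1 - \frac{1}{p_n}\right) = Q_{n-1} - \frac{Q_{n-1}}{p_n}$, so that $T_n = Q_{n-1} - Q_n$. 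This identity is the heart of the matter.

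Granting it, the partial sum collapses:
$$S_n = \sum_{i=1}^{n} T_i = \sum_{i=1}^{n}\left(Q_{i-1} - Q_i\right) = Q_0 - Q_n = 1 - Q_n.$$
Thus the theorem reduces to the single assertion that $Q_n \to 0$ as $n \to \infty$, i.e. that the Euler product $\prod_p(1 - 1/p)$ diverges to zero.

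For this last step I would lean on the divergence of $\sum 1/p$, which the paper already takes as known. Using the elementary bound $\log(1-x) \le -x$ on $[0,1)$, one gets
$$\log Q_n = \sum_{i=1}^{n}\log\left(1 - \frac{1}{p_i}\right) \le -\sum_{i=1}^{n}\frac{1}{p_i} \longrightarrow -\infty,$$
so $Q_n \to 0$ and hence $S_n = 1 - Q_n \to 1$. (If a sharper statement were wanted, Mertens' third theorem gives $Q_n \sim e^{-\gamma}/\log p_n$, but the crude estimate is all that is needed here.)

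The honest answer to "where is the difficulty" is that, once the right reformulation is found, there is almost none: the only real step is noticing the telescoping identity $T_n = Q_{n-1} - Q_n$, after which everything reduces to the classical fact that $\prod_p(1-1/p) = 0$, equivalent to $\sum 1/p = \infty$. That classical fact is the one genuinely nontrivial ingredient, and reproving it from scratch is the only thing standing between this argument and a fully self-contained proof.
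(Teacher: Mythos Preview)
Your proof is correct and follows essentially the same route as the paper: both establish the identity $1 - S_n = \prod_{i=1}^{n}(1-1/p_i)$ (the paper via an induction that is exactly your telescoping identity $T_n = Q_{n-1}-Q_n$ unwound) and then argue that this product tends to zero. The only minor difference is that the paper invokes Mertens' third theorem for the vanishing of the product, while you use the more elementary divergence of $\sum 1/p$ together with $\log(1-x)\le -x$; your version is a bit cleaner and demands less machinery, but the two arguments are otherwise the same.
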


Before proving that this series converges, we examine the series (\ref{eq:sum1}) 
term-wise and find the interesting relation
\begin{equation}
    T_n  = \frac{1-S_n}{p_n-1}.
\label{eq:Tn}
\end{equation}
For example, $T_2 = 1/6 = (1-2/3)/(3-1)$. We will soon prove Eq.~(\ref{eq:Tn}).\\
{\it Remark.} Note that $S_n$ can be written as
\begin{equation}
    S_n  = \sum^{n}_{k=1} \frac{\prod^k_{i=1}(p_i - 1)}
	{(p_k - 1)\prod^k_{i=1} (p_i)}
\label{eq:Sn}
\end{equation}
The term-wise relation is interesting because every term of the series 
includes the partial sum up to that term, implying a recursive nature. 

\begin{lemma}
Let $S_n$ be the series given by (1) and $p_i$ be the i$^{th}$ prime, then 
$$
    1-S_{n} = \prod^n_{i=1} (1-\frac{1}{p_{i}}).
$$
\end{lemma}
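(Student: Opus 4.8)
The plan is to prove the identity by induction on $n$, or equivalently — and more transparently — by exhibiting the partial sum $S_n$ as a telescoping sum. Write $P_n := \prod_{i=1}^{n}\left(1 - \frac{1}{p_i}\right)$, with the convention $P_0 = 1$ (the empty product). The lemma is then precisely the statement $S_n = 1 - P_n = P_0 - P_n$, so it suffices to show that the $n$-th term is the consecutive difference $T_n = P_{n-1} - P_n$.

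First I would simplify the general term. Recognizing the numerator of $T_n$ as $\phi(p_{n-1}\#)$ and using that Euler's totient is multiplicative with $\phi(p) = p-1$ on the squarefree integer $p_{n-1}\#$ — equivalently, using the product form $T_n = \frac{(p_1 - 1)(p_2 - 1)\cdots(p_{n-1}-1)}{p_1 p_2 \cdots p_n}$ recorded above — one obtains
$$ T_n \;=\; \frac{1}{p_n}\prod_{i=1}^{n-1}\left(1 - \frac{1}{p_i}\right) \;=\; \frac{P_{n-1}}{p_n}. $$

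Next I would observe the purely algebraic fact $P_{n-1} - P_n = P_{n-1}\left(1 - \left(1 - \frac{1}{p_n}\right)\right) = \frac{P_{n-1}}{p_n}$, which by the previous step is exactly $T_n$. Summing over $i = 1, \dots, n$ then collapses the series:
$$ S_n \;=\; \sum_{i=1}^{n} T_i \;=\; \sum_{i=1}^{n}\left(P_{i-1} - P_i\right) \;=\; P_0 - P_n \;=\; 1 - P_n, $$
which rearranges to the claimed identity; the base case $n = 1$ is the direct check $S_1 = \frac{1}{2} = 1 - \left(1 - \frac{1}{2}\right)$.

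The only real content here is the reduction $T_n = P_{n-1}/p_n$, i.e.\ identifying the numerator of $T_n$ with $\phi(p_{n-1}\#)$ and invoking the totient formula for a product of distinct primes; everything afterwards is the one-line telescoping above. I therefore expect the main obstacle to be a clean justification that the numerator really counts the residues coprime to $p_{n-1}\#$ — which is where the sieve interpretation from the introduction does its work — after which the lemma falls out immediately. As a bonus, the same computation proves the term-wise relation~(\ref{eq:Tn}): since $1 - S_n = P_n = P_{n-1}\left(1 - \frac{1}{p_n}\right) = (p_n T_n)\left(1 - \frac{1}{p_n}\right) = (p_n - 1)T_n$, we get $T_n = (1 - S_n)/(p_n - 1)$.
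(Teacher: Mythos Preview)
Your proof is correct and follows essentially the same route as the paper's: both rest on the identity $T_n = P_{n-1} - P_n$, which you present as a telescoping sum and the paper presents as the equivalent induction step $1 - S_{n+1} = (1 - S_n) - T_{n+1}$. Your version is slightly tidier in that it derives relation~(\ref{eq:Tn}) as a corollary rather than invoking it mid-proof.
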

\begin{proof}
Inserting the definition of $T_n$ into (\ref{eq:Sn}) and rearranging, 
it can be shown true for the first few values of $n$.
For $n=1$, it is trivial.  For $n=2$
$$1-\frac{1}{p_1}-\frac{p_1-1}{p_1p_2} = (1-\frac{1}{p_1})(1-\frac{1}{p_2})$$
Next proceed by induction. Defining, 
\begin{equation}\label{eq:3}
1 - S_{n+1} = (1-S_{n}) - T_{n+1}
\label{lemma1}
\end{equation}
Substituting (\ref{eq:Tn}) and (\ref{eq:Sn}), we get
\begin{equation}
    1-S_{n+1} = \prod^n_{i=1}(1-\frac{1}{p_{i}}) - 
    \frac{\prod^{n+1}_{i=1}(p_{i}-1)}{(p_{n+1}-1)\prod^{n+1}_{i=1}(p_{i})} \ .
\end{equation}
Rewrite the right side so that all the indices of the products are the same, 
and cancel terms to get
\\
\begin{equation}
        1-S_{n+1} = \prod^{n}_{i=1}(1-\frac{1}{p_{i}}) - 
	\frac{\prod^{n}_{i=1}(p_{i}-1)}{(p_{n+1})\prod^{n}_{i=1}(p_{i})}
\end{equation}
\\
Factoring the right side and doing some algebra leads to 
\begin{equation}\label{eq:6}
 1-S_{n+1} = (1- \frac{1}{(p_{n+1})})\prod^{n}_{i=1}(1-\frac{1}{p_{i}})
\end{equation}
which is equivalent to
\begin{equation}
  1-S_{n+1} = \prod^{n+1}_{i=1}(1-\frac{1}{p_{i}})  
\end{equation}
\end{proof}

\noindent\begin{proof}[Theorem \ref{thm:one}]
Using Lemma \ref{lemma1}, 
\begin{equation}
    S_{n} = 1 - \prod^{n}_{i=1}(1-\frac{1}{p_{i}}) .
\end{equation}
This is a product that arises in Merten's Third Theorem \cite{Merten}, 
giving
\begin{equation}
    S_{n} = 1 - \frac{e^{\gamma}}{log(p_n)}
\end{equation}
where $\gamma$ is Euler's constant.
The second term vanishes in the limit $n \rightarrow \infty$, leaving
\begin{equation}
    S_{n} = 1
\end{equation}
\end{proof}

We know intuitively that this is the case, since by the Prime Number Theorem we know that the density of primes tends toward zero. In a more rigorous statement, the ratio all non-prime numbers to all integers tends asymptotically to 1. After the above conclusion, we next looked to expand our analysis to other number theoretic sieves.

\subsection{Square-Free Numbers}
Next, we examine the square free numbers. As with the primes, we begin with the integers greater than 1. Sieving out any number divisible by $2^2$, this will eliminate $\frac{1}{4}$ of the integers. Next we sieve by $3^2$, eliminating $\frac{1}{3^2} - \frac{1}{2^2*3^2}$ to account for double counting. We can already see that this eliminates a smaller fraction of the total number line than primes, and should lead to a more interesting result. Each term in this series of square-free (SF) numbers can then be written as 
$$T^{SF}_n = \frac{1^2 \cdot 2^2 \cdot 4^2 \cdots (p_{n-1}^2-1)}{2^2 \cdot 3^2 \cdot 5^2 \cdots p_n^2}$$

\begin{theorem}
Let $S^{SF}_n$ be defined as the partial sum such that
$$ S^{SF}_n  = \sum^{n}_{i=1} T^{SF}_i, $$
then,
$$\lim_{n\to\infty} S^{SF}_n = 1-\frac{6}{\pi^2}$$.
\label{thm:SquareFreeSum}
\end{theorem}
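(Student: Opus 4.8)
The plan is to replay the argument of Lemma 1 and Theorem 1 almost verbatim, with every occurrence of $p_i$ replaced by $p_i^2$, and then to invoke Euler's product for $\zeta(2)$ in place of Mertens' theorem. First I would record the recursion that holds by definition of the terms: setting $S^{SF}_0 = 0$,
\[
1 - S^{SF}_{n+1} = (1 - S^{SF}_n) - T^{SF}_{n+1},
\]
so the whole problem reduces to expressing $T^{SF}_{n+1}$ in closed form relative to the previous partial sum.

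Next I would prove by induction on $n$ the square-free analogue of Lemma 1, namely
\[
1 - S^{SF}_n = \prod_{i=1}^{n}\left(1 - \frac{1}{p_i^2}\right).
\]
The base case $n=1$ is just $1 - \tfrac14 = 1 - 2^{-2}$. For the inductive step, I would factor numerator and denominator of
\[
T^{SF}_{n+1} = \frac{\prod_{i=1}^{n}(p_i^2 - 1)}{\prod_{i=1}^{n+1} p_i^2}
= \frac{1}{p_{n+1}^2}\prod_{i=1}^{n}\left(1 - \frac{1}{p_i^2}\right),
\]
which by the induction hypothesis equals $\frac{1}{p_{n+1}^2}\,(1 - S^{SF}_n)$. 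Substituting this into the recursion yields $1 - S^{SF}_{n+1} = (1 - S^{SF}_n)\!\left(1 - \tfrac{1}{p_{n+1}^2}\right)$, which completes the induction. This is structurally identical to the algebra in the proof of Lemma 1; the only change is the exponent, so one should also check that the intermediate term-wise relation $T^{SF}_n = (1 - S^{SF}_n)/(p_n^2 - 1)$ goes through in the same way.

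Finally I would pass to the limit. Since $\sum_i p_i^{-2}$ converges, the infinite product $\prod_i (1 - p_i^{-2})$ converges to a nonzero value, and by Euler's product formula for the Riemann zeta function at $s=2$,
\[
\prod_{i=1}^{\infty}\left(1 - \frac{1}{p_i^2}\right) = \frac{1}{\zeta(2)} = \frac{6}{\pi^2}.
\]
Hence $\lim_{n\to\infty} S^{SF}_n = 1 - 6/\pi^2$, as claimed.

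I expect no serious obstacle here: the only mild subtlety is the index bookkeeping in the inductive step — aligning the ranges of the products before cancelling — exactly as in Lemma 1. If anything, the square-free case is easier than the prime case, because the limiting Euler product is already convergent, so no rate-of-decay estimate (and hence no appeal to Mertens' asymptotic) is needed; the sole analytic input is the classical value $\zeta(2) = \pi^2/6$. I would also remark that the argument is a special case of a general lemma — replacing $p_i$ by any sequence $a_i \ge 2$ gives $1 - S_n = \prod_{i=1}^n (1 - 1/a_i)$ — which would streamline the later generalizations, but I would carry out the $a_i = p_i^2$ case explicitly for concreteness.
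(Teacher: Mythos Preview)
Your proposal is correct and follows essentially the same approach as the paper: replay the Lemma~1 induction with $p_i$ replaced by $p_i^2$ to obtain $1 - S^{SF}_n = \prod_{i=1}^n (1 - p_i^{-2})$, then invoke the Euler product $\prod_p (1 - p^{-2}) = 1/\zeta(2) = 6/\pi^2$. Your remark that this is a special case of a general lemma with arbitrary $a_i \ge 2$ in place of $p_i$ is exactly the content of the paper's later Theorem~\ref{thm:general}.
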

\begin{proof} Because this is of the same form as the first series, only with squares in the position of the primes, the same steps in the above proofs follow. Using a result similar to Lemma \ref{lemma1}, we can write the final series as 

\begin{equation}
    S^{SF}_{n} = 1 - \prod^{n}_{i=1}(1-\frac{1}{p_{i}^2}) .
\end{equation}

This product has a well-known value related to the Riemann zeta function, where 

$$\prod^{n}_{i=1}(1-\frac{1}{p_{i}^2}) = \frac{1}{\zeta(2)} = \frac{6}{\pi^2}$$

it follows that

\begin{equation}
    S^{SF}_{n} = 1 - \frac{6}{\pi^2} .
\end{equation}

\end{proof}
\subsection{Twin Primes}
We also decided to look at sieving into twin primes (TP). Twin primes are pairs 
of numbers such that for some p, both p and p+2 are prime. 
We now count the proportion of primes taken out at any stage 
of the sieve that are twin primes. We can do this by defining another set
for $n>2$,
\begin{equation}
    B_n = \{ x \leq p_n\# \, | 
    \, gcd(x,p_n\#)=p_n\ and\ gcd(x\pm 2,p_n\#)=1 \}
\end{equation}
This allows us to define another sequence as
\begin{equation} 
T^{TP}_n = \frac{|B_n|}{p_n\#} = 
\frac{1}{3},\,\frac{1}{15},\,\frac{3}{105},\,
\frac{15}{1155},\, \cdots \quad \textrm{for} \quad n = 2,3,4,\dots
\end{equation}
These are the fractions of pairs (x,x+2) {\it mod} $p_n$ which are removed 
at each step of the sieve (out of a total of $p_n\# /2$ possible odd pairs).
When taking the sum, the series becomes
\begin{equation}
    S^{TP}_n = \sum^n_{k=2} T^{TP}_k = \sum^n_{k=2} \frac{\prod^k_{i=2} (p_i-2)}
	{(p_k-2)\prod^k_{i=2}  (p_i)}
\label{eq:s2n}
\end{equation}
This series, following the same method as above, is shown to converge.

\begin{theorem}
Let 
$$S^{TP}_n = \frac{1}{3} + \frac{1}{15} + \frac{3}{105} + \cdots + 
  \frac{1\cdot 3 \cdot 5 \cdots (p_{n-1}-2)}{3\cdot 5\cdot 7 \cdots p_n}$$
then
$$    \lim_{n\to\infty} S^{TP}_n   = \frac{1}{2} $$
\label{thm:TwinPrimeSum}
\end{theorem}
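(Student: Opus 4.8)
The plan is to follow the template of Theorem~\ref{thm:one} and Theorem~\ref{thm:SquareFreeSum}: first pin down a closed form for the partial sum $S^{TP}_n$ as a constant minus a product over primes, then read off the limit from the asymptotics of that product. The only genuine difference from the prime case is bookkeeping: each factor now carries $p_i-2$ rather than $p_i-1$, the index runs from $i=2$ (so $p_2=3$), and these two facts together will conspire to produce the prefactor $\frac{1}{2}$.

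First I would establish the analog of Lemma~\ref{lemma1}, namely that
$$ 1 - 2\,S^{TP}_n \;=\; \prod_{i=2}^{n}\left(1-\frac{2}{p_i}\right), \qquad \textrm{equivalently} \qquad S^{TP}_n \;=\; \frac{1}{2}\left(1-\prod_{i=2}^{n}\left(1-\frac{2}{p_i}\right)\right). $$
The fastest route is direct telescoping. Set $P_n=\prod_{i=2}^{n}\left(1-\frac{2}{p_i}\right)$, and note that the given formula for $T^{TP}_n$ simplifies, after cancelling the factor $p_n-2$, to $T^{TP}_n=\frac{\prod_{i=2}^{n-1}(p_i-2)}{\prod_{i=2}^{n}p_i}$. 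A one-line computation then gives $P_{n-1}-P_n = P_{n-1}\cdot\frac{2}{p_n} = 2\,T^{TP}_n$, so that $S^{TP}_n=\sum_{k=2}^{n}T^{TP}_k=\frac{1}{2}\sum_{k=2}^{n}(P_{k-1}-P_k)=\frac{1}{2}(P_1-P_n)=\frac{1}{2}(1-P_n)$, using that the empty product $P_1$ equals $1$. Alternatively one can simply rerun the base case ($n=2$) and the induction step of Lemma~\ref{lemma1} with $p_i-1$ replaced by $p_i-2$ everywhere; along the way one also picks up the term-wise identity $T^{TP}_n=(1-2S^{TP}_n)/(p_n-2)$, the direct analog of Eq.~(\ref{eq:Tn}).

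With the closed form in hand, the limit follows once we know $\prod_{i=2}^{\infty}\left(1-\frac{2}{p_i}\right)=0$. Since $0<2/p_i<1$ for every $i\ge 2$ and $\log(1-x)\le -x$ on $[0,1)$, we get $\sum_{i\ge 2}\log\!\left(1-\frac{2}{p_i}\right)\le -2\sum_{i\ge 2}\frac{1}{p_i}=-\infty$, because the sum of reciprocals of the primes diverges (the classical fact recalled in the introduction); hence the product vanishes. Equivalently, one may invoke a Mertens-type estimate $\prod_{3\le p\le x}(1-2/p)\asymp(\log x)^{-2}\to 0$, paralleling the use of Mertens' third theorem in Theorem~\ref{thm:one}. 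Therefore $\lim_{n\to\infty}S^{TP}_n=\frac{1}{2}(1-0)=\frac{1}{2}$.

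I expect the only real obstacle to be getting the normalization exactly right rather than anything analytic: one must keep the sum starting at $k=2$ (which is also what keeps all factors $1-2/p_i$ strictly between $0$ and $1$, so the product is well behaved), recognize that the empty product $P_1=1$ supplies the leading $1$, and track the factor $2$ coming from $p_i-2$ that is responsible for the $\frac{1}{2}$ in the answer. Once the closed form is correct, the vanishing of the product is standard and entirely parallel to the prime case.
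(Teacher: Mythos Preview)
Your proof is correct and follows the same overall strategy as the paper: establish the closed form $\frac{1}{2}-S^{TP}_n=\frac{1}{2}\prod_{i=2}^{n}(1-2/p_i)$ (the analog of Lemma~\ref{lemma1}), then observe that the product vanishes. The differences are in execution rather than substance. For the closed form, you lead with a clean telescoping identity $P_{n-1}-P_n=2T^{TP}_n$, whereas the paper simply says to rerun the induction of Lemma~\ref{lemma1} with $p_i-2$ in place of $p_i-1$; you mention this induction as an alternative, so the two are aligned. For the vanishing of $\prod_{p\ge 3}(1-2/p)$, the paper just cites \cite{Riesel}, while you supply the elementary argument via $\log(1-x)\le -x$ and the divergence of $\sum 1/p$; your version is therefore self-contained, and the optional Mertens-type estimate you mention parallels the paper's use of Mertens in Theorem~\ref{thm:one}.
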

\begin{proof}
We follow the same steps as for Theorem \ref{thm:one}, except now the numerator 
replaces $(p_i - 1)$ by $(p_i - 2)$, as in (\ref{eq:s2n}) above, 
and the relation between the sum and the product is

$$  \frac{1}{2} - S^{TP}_n = \frac{1}{2} \prod^{n}_{i=2} (1 - \frac{2}{p_i}) $$
and follow the same steps as for Lemma 1.
The product on the RHS is known to vanish \cite{Riesel} in the limit 
$n\to \infty$.
\end{proof}

This lead to seeking the general form of these series, which we examine next. 

\section{General Series}
\begin{theorem}
Let $F_i$ be an arbitrary sequence and let 
$$ S_n = \sum^{n}_{k=1}\frac{\prod^{k}_{i=1}(F_{i}-a)}
       {(F_{k}-a)\prod^{k}_{i=1}(F_{i})},
$$
then,
$$ \frac{1}{a}-S_{n} = \frac{1}{a} \cdot 
\frac{\prod^{n}_{i=1}(F_{i}-a)}{\prod^{n}_{i=1}(F_{i})}
$$
for some constant $a$.
\label{thm:general}
\end{theorem}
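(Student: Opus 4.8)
The plan is to prove Theorem~\ref{thm:general} by induction on $n$, following exactly the template of Lemma~\ref{lemma1}, of which it is the natural abstraction (the prime series is $F_i=p_i$, $a=1$; the square-free series is $F_i=p_i^2$, $a=1$; the twin-prime series is $F_i=p_i$, $a=2$). Throughout I would assume $a\neq 0$ and $F_i\notin\{0,a\}$ for every $i$, so that each fraction appearing is well defined; these conditions are implicit in the statement and hold in all the number-theoretic instances above. With those in hand, the whole argument is a one-line factorization repeated.

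The one genuinely computational step is to record the simplified form of the general summand. Writing $T_k$ for the $k$-th term, the factor $(F_k-a)$ in the denominator cancels one factor of the numerator product, so
\[ T_k \;=\; \frac{\prod_{i=1}^{k-1}(F_i-a)}{\prod_{i=1}^{k}F_i}, \]
which is the abstract analogue of Eq.~(\ref{eq:Tn}). Everything else follows formally from this.

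For the base case $n=1$ one has $S_1=1/F_1$, hence $\tfrac{1}{a}-S_1=\tfrac{F_1-a}{aF_1}=\tfrac{1}{a}\cdot\tfrac{F_1-a}{F_1}$, as claimed. For the inductive step, assume the identity for $n$ and write $\tfrac{1}{a}-S_{n+1}=\left(\tfrac{1}{a}-S_n\right)-T_{n+1}$. Substituting the inductive hypothesis and the simplified form of $T_{n+1}$ gives
\[ \tfrac{1}{a}-S_{n+1} \;=\; \tfrac{1}{a}\prod_{i=1}^{n}\frac{F_i-a}{F_i} \;-\; \frac{\prod_{i=1}^{n}(F_i-a)}{\prod_{i=1}^{n+1}F_i}. \]
Factoring $\prod_{i=1}^{n}\frac{F_i-a}{F_i}$ out of the right-hand side leaves the scalar $\tfrac{1}{a}-\tfrac{1}{F_{n+1}}=\tfrac{F_{n+1}-a}{aF_{n+1}}$, and therefore $\tfrac{1}{a}-S_{n+1}=\tfrac{1}{a}\prod_{i=1}^{n+1}\frac{F_i-a}{F_i}$, completing the induction.

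A slicker route, which I would at least mention, bypasses induction entirely: the same one-line factorization shows the summand telescopes, namely $T_k=\tfrac{1}{a}\!\left(P_{k-1}-P_k\right)$ with $P_j:=\prod_{i=1}^{j}\frac{F_i-a}{F_i}$ and $P_0=1$, so that $S_n=\tfrac{1}{a}(1-P_n)$ and the assertion is immediate. In either presentation there is no real obstacle; the content is entirely in spotting the cancellation of $(F_k-a)$ and the resulting telescoping structure, and the only thing to be careful about in the write-up is bookkeeping with the shifted product indices $\prod^{k}$ versus $\prod^{k-1}$, precisely as in the proof of Lemma~\ref{lemma1}. I would also add two remarks: first, the non-vanishing hypotheses on $a$ and on the $F_i$ should be stated explicitly; and second, Theorem~\ref{thm:general} is a purely algebraic identity for the partial sums — whether $S_n$ converges as $n\to\infty$, and to what, is a separate question decided entirely by the behavior of $\prod_i(1-a/F_i)$, which is where the specific analytic input (Mertens' theorem, the value of $\zeta(2)$, the twin-prime constant) enters in the applications.
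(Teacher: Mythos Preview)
Your proof is correct and follows essentially the same inductive route as the paper: verify the $n=1$ base case, write $\tfrac{1}{a}-S_{n+1}=(\tfrac{1}{a}-S_n)-T_{n+1}$, substitute the hypothesis and the simplified $T_{n+1}$, and factor out $\prod_{i=1}^n(F_i-a)/F_i$ to pick up the missing $(F_{n+1}-a)/F_{n+1}$. The telescoping reformulation you add is a nice bonus, as are the explicit non-degeneracy conditions on $a$ and the $F_i$, which the paper leaves implicit.
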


After deriving the results from twin primes, we noticed that Lemma \ref{lemma1} was 
a valid relation for not just the primes, but any for general series defined
with the n$^{th}$ term defined by
\begin{equation}
T_n = \frac{\prod^n_{i=1}(F_{i}-a)}{(F_{n}-a)\prod^n_{i=1}(F_{i})}
\label{eq:TnGen}
\end{equation}
where $a$ is some constant. 
We will prove Theorem \ref{thm:general} for this series in the same way we did 
previously, using proof by induction.

\noindent\begin{proof}[Theorem \ref{thm:general}]
We start with the first few terms, 
\begin{equation}
    [n=1]: \frac{1}{a} - \frac{1}{F_{1}} = \frac{F_{1}-a}{aF_{1}}
\end{equation}
\begin{equation}
    [n=2]: \frac{1}{a} - \frac{1}{F_{1}} - \frac{F_{1}-a}{F_{1}F_{2}}= \frac{(F_{1}-a)(F_{2}-a)}{aF_{1}F_{2}}
\end{equation}
We then assume Theorem \ref{thm:general} is true for n and examine the case for n+1.
\begin{equation}
    \frac{1}{a}-S_{n+1} = \frac{1}{a}-S_{n} - T_{n+1}.
\end{equation}
We then make substitutions noting that $T_{n}$ is the nth term in our series
\begin{equation}
   \frac{1}{a}-S_{n+1} = \frac{\prod^{n}_{i=1}(F_{i}-a)}{a\prod^{n}_{i=1}(F_{i})} - \frac{\prod^{n+1}_{i=1}(F_{i}-a)}{(F_{n+1}-a)\prod^{n+1}_{i=1}(F_{i})}.
\end{equation}
Then, we can expand the right side and collect terms to obtain
\begin{equation}
   \frac{1}{a}-S_{n+1} = \frac{\prod^{n}_{i=1}(F_{i}-a)}{a\prod^{n}_{i=1}(F_{i})} - \frac{\prod^{n}_{i=1}(F_{i}-a)}{(F_{n+1})\prod^{n}_{i=1}(F_{i})}.
\end{equation}
Factoring terms and rearranging gets us,
\begin{equation}
   \frac{1}{a}-S_{n+1} = \frac{\prod^{n}_{i=1}(F_{i}-a)}{a\prod^{n}_{i=1}(F_{i})}(1- \frac{a}{(F_{n+1})})
\end{equation}
Which, after some algebra, can be written as
\begin{equation}
   \frac{1}{a}-S_{n+1} = \frac{\prod^{n+1}_{i=1}(F_{i}-a)}{a\prod^{n+1}_{i=1}(F_{i})}
\end{equation}
\end{proof}

\section{Discussion and Applications}
The astute reader will have noticed that Theorem \ref{thm:general} is simply a 
consequence of expanding the product on the RHS using the binomial theorem. 
Rearranging Theorem \ref{thm:general} gives us a more useful form 
\begin{equation}
    \frac{1}{a}-\sum^{n}_{k=1}\frac{\prod^{k}_{i=1}(1-\frac{a}{F_{i}})}{(F_{i}-a)} = \frac{1}{a}\prod^n_{i=1}(1-\frac{a}{F_{i}}).
\label{eq:useful}
\end{equation}
We can use this to show the general form of the relation (\ref{eq:Tn}), 
by substituting in (\ref{eq:TnGen}) we obtain 
\begin{equation}
\frac{\frac{1}{a}-S_{n}}{F_n - a} = T_{n}.
\label{eq:recursive}
\end{equation}
This relation implies the same intrinsic recursive relation in the series 
such that each n$^{th}$ term contains the related n$^{th}$ partial sum. 
Furthermore, if we take $a=1$ then the right side of (\ref{eq:useful}) 
becomes a familiar form for many series, 
\begin{equation}
    \prod^{n}_{i=1}(1-\frac{1}{F_{i}}).
\label{eq:familiar}
\end{equation}
This form can be found in Mertens' theorems when $F_n$ is taken to be 
the sequence of primes. Other well known examples include the evaluation 
for $F_n = n^2$, then (\ref{eq:familiar}) is equal to $\frac{1}{2}$, 
and for $F_n = n$, then (\ref{eq:familiar}) is equal to 0 in 
the limits as n goes to infinity. 
So, by (\ref{eq:useful}), this implies their respective sums, $S_n$ are 
equal to $\frac{1}{2}$ and 1, respectively (in the same limit). 

This relation can be used to examine other interesting sequences such as 
the twin primes. 
We are interested in the infinite product that results from taking one 
minus the reciprocals of the twin primes, similar to the form found in 
Merten's Third Theorem, and finding whether or not it converges.
We begin by defining the twin prime pairs
$$(3,5),(5,7),(11,13),(17,19),(29,31),...$$
and arranging them into a sequence we define
$$p2=\{3,5,5,7,11,13,19,29,31,...\}$$
to be consistent with Brun's Theorem (see below). 
We can then describe our result.
\begin{theorem}
The product
\begin{equation}
  K = \prod^\infty_{i=1}(1-\frac{1}{p2_{i}}).
\end{equation}
converges to a constant, where K is bounded from above using Brun's constant.
\label{thm:K}
\end{theorem}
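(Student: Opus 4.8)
The plan is to obtain convergence from the classical criterion relating infinite products to series, and then to read off the upper bound by a one-line logarithmic estimate. First I would note that the partial products $K_n = \prod_{i=1}^{n}(1-1/p2_i)$ are positive and strictly decreasing, since every factor lies strictly between $0$ and $1$ (each $p2_i \ge 3$, so $0 < 1/p2_i \le 1/3$). Hence $(K_n)$ is a bounded monotone sequence and already converges to some limit $K \ge 0$; the only real content is to show $K > 0$, i.e. that the product does not degenerate to zero.

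For that I would invoke the standard fact that for $0 \le a_i < 1$ the product $\prod(1-a_i)$ converges to a nonzero limit if and only if $\sum a_i$ converges (and equals $0$ otherwise). Applying this with $a_i = 1/p2_i$, positivity of $K$ is \emph{equivalent} to the convergence of $\sum_i 1/p2_i$. But this last sum is exactly Brun's constant $B$: the sequence $p2$ was arranged so that each member of every twin-prime pair $(p,p+2)$ appears, with $5$ listed twice, which reproduces term for term Brun's sum $B = \sum (1/p + 1/(p+2))$. Brun's theorem \cite{Riesel} guarantees $B < \infty$, so $K$ is a well-defined positive constant.

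To get the upper bound I would take logarithms: $\log K = \sum_i \log(1-1/p2_i)$, and use the elementary inequality $\log(1-x) \le -x$ valid on $0 \le x < 1$. Summing yields $\log K \le -\sum_i 1/p2_i = -B$, hence $K \le e^{-B}$, which is the advertised bound expressed through Brun's constant (numerically $e^{-B}\approx 0.15$, consistent with the value of the partial products).

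The main obstacle, such as it is, lies not in the elementary estimates but in recognizing that the \emph{positivity} of $K$ is genuinely nontrivial and rests entirely on Brun's theorem, one of the deeper facts about twin primes: the monotone–bounded argument alone shows $K_n$ converges but leaves open whether the limit is $0$, and ruling that out is where the real input enters. A secondary point requiring care is the bookkeeping of the multiset $p2$ — the repetition of $5$ and the ordering — so that $\sum_i 1/p2_i$ matches the customary definition of $B$ precisely; any mismatch there would alter the constant appearing in the bound.
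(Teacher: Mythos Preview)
Your argument is correct and in fact slightly sharper than the paper's, but it takes a different route. The paper does not appeal to the classical product criterion; instead it invokes its own Theorem~\ref{thm:general} with $a=1$ and $F_i=p2_i$ to write the partial product as $K_n = 1 - S_n$, where $S_n$ is the associated series. It then rewrites the $k$th term of $S_n$ as $\tfrac{1}{p2_k}\prod_{i<k}(1-1/p2_i)$ and compares it term-by-term with Brun's series $B_n=\sum_{k\le n}1/p2_k$; since each extra factor is less than $1$, every term of $S_n$ is dominated by the corresponding term of $B_n$, so $S_n$ converges by comparison, and hence $K_n=1-S_n$ converges. Your approach bypasses the series--product identity altogether and applies the standard criterion ``$\prod(1-a_i)>0$ iff $\sum a_i<\infty$'' directly to $a_i=1/p2_i$, which is the cleanest way to isolate Brun's theorem as precisely the needed input; your logarithmic step also delivers the explicit bound $K\le e^{-B}$, which the paper does not extract (its comparison only yields $S_\infty\le B$, and through $K=1-S_\infty$ that gives no nontrivial upper bound on $K$). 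What the paper's detour buys is thematic coherence: the proof exercises the general series--product relation that is the paper's main device, rather than quoting an external convergence criterion.
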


\noindent\begin{proof}
To evaluate this and prove its convergence we construct the left hand side 
of (\ref{eq:recursive}), taking $a=1$ in Theorem \ref{thm:general},
\begin{equation}
\label{eq:twinprod}
    S_n  = \sum^n_{k=1} \frac{\prod^k_{i=1}(p2_i-1)}
	{(p2_i-1)\prod^k_{i=1} (p2_i)}
\end{equation}
We will compare this series to the series found in Brun's Theorem \cite{Brun}. 
First, we will write the series from Brun's Theorem as 
\begin{equation}
    B_n  =  \sum^n_{k=1} \frac{1}{(p2_k)}
\end{equation}
Where $B$ goes to Brun's constant as $n \to \infty$, is $1.90216\dots$.
Note that Brun originally defined the sum over all twin pairs, 
$(1/3+1/5) + (1/5+1/7) + \dots$, giving a repeated value of $1/5$ in the sum.
The two series can then be compared term-wise. 
We begin by rewriting (\ref{eq:twinprod}) as 
\begin{equation}
    S_n  = \sum^n_{k=1} \frac{1}{p2_k} 
	\frac{\prod^{k-1}_{i=1}(p2_i-1)}{\prod^{k-1}_{i=1} (p2_i)}
\end{equation}
Using the fact that
\begin{equation}
   \prod^{k-1}_{i=1}(p2_i-1) < \prod^{k-1}_{i=1} (p2_i)
\end{equation}
for all k,  
this shows each term in $S_n$ will be less than each term in $B_n$. 
Because $B_n$ is known to converge, $S_n$ must likewise converge. 
Using Theorem \ref{thm:general} we can relate the series to the product, and so 
both series and product converge. 
\end{proof}

The constant $K$ in Theorem \ref{thm:K} can then be approximated heuristically. Using 
a computer program, we obtain the estimate
\begin{equation}
     K = \prod^{\infty}_{i=1}(1-\frac{1}{p2_{i}}) \approx 0.12933717\dots
\label{eq:K}
\end{equation}
The numerical methods used to evaluate this constant will be presented 
in a future paper \cite{WardHicks}.
 
This constant is of interest in the same way that Brun's constant is of 
interest, in that there is a comparison to the evaluation of the series 
of primes. The sum of the reciprocals of primes is divergent, yet the sum 
of the reciprocals of twin primes converges to Brun's constant \cite{Brun}. 
Similarly, we have shown here that in contrast to the product found in 
Merten's third theorem which converges to zero, the product (\ref{eq:K}) 
converges to a non-zero constant. 
This is another proof that the twin primes are more sparsely populated 
than primes, although it is still not known whether or not there are an 
infinite number of twin primes.

\newpage

\end{document}